\newcommand{\Maps}{\operatorname{\mathcal{M}}\mathfrak{aps}}
\mathchardef\mhyphen="2D
\newcommand{\std}{{\operatorname{std}}}
\newcommand{\Diff}{\operatorname{Diff}}
\newcommand{\SO}{\operatorname{SO}}
\newcommand{\U}{\operatorname{U}}
\newcommand{\SU}{\operatorname{SU}}
\newcommand{\im}{{\operatorname{Image}}}
\newcommand{\Id}{{\operatorname{Id}}}
\newcommand{\Rot}{\operatorname{Rot}}
\newcommand{\tb}{\operatorname{tb}}
\newcommand{\Top}{\operatorname{Top}}
\newcommand{\Cont}{\operatorname{Cont}}
\newcommand{\R}{{\mathbb{R}}}
\newcommand{\C}{{\mathbb{C}}}
\newcommand{\Z}{{\mathbb{Z}}}
\newcommand{\NS}{{\mathbb{S}}}
\newcommand{\D}{{\mathbb{D}}}
\newcommand{\F}{{\mathcal{F}}}
\newcommand{\Leg}{{\mathfrak{Leg}}}
\newcommand{\FLeg}{{\mathfrak{FLeg}}}
\newcommand{\FLegO}{{\overline{\mathfrak{FLeg}}}}
\newcommand{\Knots}{{\widehat{\mathcal{K}}}}
\newcommand{\Imm}{{\mathfrak{Imm}}}
\newcommand{\Emb}{{\mathfrak{Emb}}}
\newtheorem{theorem}{Theorem}[subsection]
\newtheorem{lemma}[theorem]{Lemma}
\newtheorem{proposition}[theorem]{Proposition}
\theoremstyle{definition}
\newtheorem{definition}[theorem]{Definition}
\newtheorem{remark}[theorem]{Remark}
\newtheorem{example}[theorem]{Example}
\begin{document} 

\title{Loops of Legendrians in contact $3$--manifolds.}

\subjclass[2010]{Primary: 58A30, 57R17.}
\date{\today}

\keywords{}

\author{Eduardo Fern\'{a}ndez}
\address{Universidad Complutense de Madrid, 
	Facultad de Matem\'{a}ticas, and Instituto de Ciencias Matem\'{a}ticas CSIC-UAM-UC3M-UCM, C. Nicol\'{a}s Cabrera, 13-15, 28049 Madrid, Spain.}
\email{edufdezfuertes@gmail.com}

\author{Javier Mart\'{i}nez-Aguinaga}
\address{Universidad Complutense de Madrid, 
	Facultad de Matem\'{a}ticas, and Instituto de Ciencias Matem\'{a}ticas CSIC-UAM-UC3M-UCM, C. Nicol\'{a}s Cabrera, 13-15, 28049 Madrid, Spain.}
\email{xabi.martinez.aguinaga@gmail.com}

\author{Francisco Presas}
\address{Instituto de Ciencias Matem\'{a}ticas CSIC-UAM-UC3M-UCM, C. Nicol\'{a}s Cabrera, 13-15, 28049 Madrid, Spain.}
\email{fpresas@icmat.es}

\begin{abstract}
We study homotopically non--trivial spheres of Legendrians in the standard contact $\R^3$ and $\NS^3$. We prove that there is a homotopy injection of the contactomorphism group of $\NS^3$ into some connected components of the space of Legendrians induced by the natural action. We also provide examples of loops of Legendrians that are non-trivial in the space of formal Legendrians, and thus non--trivial as loops of Legendrians, but which are trivial as loops of smooth embeddings for all the smooth knot types.
\end{abstract}

\maketitle
%\setcounter{tocdepth}{1} 
%\tableofcontents

\section{Introduction.}
This small note summarizes the current understanding of the topology of the spaces of Legendrian embeddings in the $3$--dimensional case just using classical invariants. The study of the connected components of those spaces has been a classical topic in the Contact Topology literature \cite{Chekanov, DingGeiges, EliashbergFraser, Etnyre, EtnyreHonda, Lenhard, Szabo}. 

Here we focus on the higher dimensional homotopy groups, in particular in the fundamental group. We define invariants and provide partial classifications by considering the space of Legendrian embeddings as a subspace of the space of formal Legendrian embeddings. Then, we provide non--trivial homotopy elements considered as homotopy classes in the space  of formal Legendrian embeddings and clearly we conclude that the classes are non--trivial in the space of Legendrian embeddings. Do note that, in dimension $3$, there are not known examples of non--trivial spheres of Legendrian embeddings that are trivial as spheres of formal Legendrian embeddings, see \cite{FMP}. 

\textbf{Acknowledgements:} We want to specially thank Alberto Ibort for his support of our research in the last few years. In particular he has been an excellent listener of the study of the topology of the spaces of horizontal embeddings, of which the Legendrian spaces are particular instances.

The authors are supported by the Spanish Research Projects SEV--2015--0554, MTM2016--79400--P, and MTM2015--72876--EXP. The first author is supported by a Beca de Personal Investigador en Formaci\'{o}n UCM.  The second author is funded by Programa Predoctoral de Formaci\'{o}n de Personal Investigador No Doctor del Departamento de Educaci\'{o}n del Gobierno Vasco.

\section{Preliminaries on contact $3$--manifolds.}

We introduce the basic terminology in $3$--dimensional contact topology that we are going to use. For further details see \cite{GeigesBook}.

\subsection{Contact $3$--manifolds.}

\begin{definition}
	Let $M$ be a $3$--manifold. A plane field $\xi\subseteq TM$ is said to be a \em contact distribution \em if it is everywhere non integrable, i.e. locally $\xi$ can be regarded as the kernel of a $1$--form $\alpha\in\Omega^1(M)$ such that 
	\begin{equation}\label{eq:ContactCondition}
	\alpha \wedge d\alpha\neq0.
	\end{equation}
	The pair $(M,\xi)$ is a \em contact $3$--manifold\em. 
\end{definition}

Assume that $\xi$ is \em coorientable. \em Thus, $\xi=\ker \alpha$ for some $1$--form $\alpha\in\Omega^1(M)$ satisfying (\ref{eq:ContactCondition}). The election of $\alpha$ is unique up to a conformal factor. %In this case, the contact condition is equivalent to say that $(\xi,d\alpha)$ is a symplectic vector bundle over $M$. 
From now on, we will focus in the case of \em cooriented contact structures.\em

A diffeomorphism $f:(M_1,\xi_1)\rightarrow(M_2,\xi_2)$ between two contact $3$--manifolds, such that $f_*\xi_1=\xi_2$ is said to be a \em contactomorphism. \em  Denote by $\Cont(M,\xi)$ the group of contactmorphisms from $(M,\xi)$ to itself. Locally any two contact $3$--manifolds are contactomorphic, this is the content of \em Darboux's Theorem \em (see \cite{GeigesBook}, Theorem $2.5.1$).

The \em Reeb vector field \em associated to a contact form $\alpha$ defining a contact manifold $(M,\xi=\ker \alpha)$ is the vector field $R_\alpha$ defined by the conditions $\alpha(R_\alpha)\equiv 1$ and $i_{R_\alpha}d\alpha\equiv0$.

The two basic examples that we are going to study in this note are the following
\begin{example}
	\begin{itemize}
		\item [(i)] The standard contact structure on $\R^3(x,y,z)$ given by $\xi_\std=\ker(dz-ydx)$.
		\item [(ii)] The standard contact structure on $\NS^3\subseteq \C^2(z_1,z_2)$ given by $\xi_\std =T\NS^3 \cap iT\NS^3=\ker(\frac{i}{2}\sum_j z_jd\bar{z}_j-\bar{z}_j dz_j)$, where $i:T\C^2\rightarrow T\C^2$ denotes the standard complex structure on $\C^2$. It follows that $(\R^3,\xi_\std)$ is contactomorphic to $(\NS^3\backslash\{p\},\xi_{\std_{|\NS^3\backslash\{p\}}})$, for any point $p\in\NS^3$ (see \cite{GeigesBook}, Proposition $2.1.8$). This justifies the notation.
	\end{itemize}
\end{example}

\section{Legendrian submanifolds.}

Following \cite{Etnyre,GeigesBook} we introduce the notion of \em Legendrian submanifold \em of a contact manifold. We define the (formal) \em classical invariants. \em We also introduce two (formal) invariants for loops of Legendrians. 

\subsection{Legendrian submanifolds.}

\begin{definition}
	Let $(M,\xi)$ be a contact $3$--manifold. An embedded oriented circle $L\subseteq M$ is said to be \em Legendrian \em if $TL\subseteq\xi$. A \em Legendrian embedding \em is any embedding that parametrizes a Legendrian submanifold. 
\end{definition}

Denote by $\widehat{\Leg}(M,\xi)$ the space of Legendrian submanifolds of $(M,\xi)$ and by $\Leg(M,\xi)$ the space of Legendrian embeddings of $(M,\xi)$. Note that $\widehat{\Leg}(M,\xi)=\Leg(M,\xi)/\Diff^+(\NS^1)$.

A key result in the theory of Legendrian submanifolds is  the \em Weinstein's Tubular Neighbourhood Theorem \em (see \cite{GeigesBook}, Corollary $2.5.9$) which asserts that two diffeomorphic Legendrians have contactomorphic neighbourhoods. Thus, any Legendrian circle $L$ in a contact $3$--manifold has a tubular neighbourhood contactomorphic to a tubular neighbourhood of $\NS^1\times\{0\}\subseteq(\NS^1\times\R^2(\theta,(x,y)),\ker(\cos\theta dx-\sin\theta dy))$.

\subsubsection{Projections.}
There are two distinguished projections $\pi:\R^3\to\R^2$ that are useful in the study of Legendrians. When projecting onto $\R^2$ via the two projections that we will define, each embedding is mapped to a unique curve in $\R^2$. Nevertheless, the converse results are partially true. We can recover a unique Legendrian curve in $(\R^3,\xi_\std)$ from curves in $\R^2$ satisfying certain conditions.

\begin{definition}
We define the \em Lagrangian projection \em as 
	\begin{center}
		$\begin{array}{rccl}
		\pi_{L}\colon & \R^3 & \longrightarrow & \R^2\\
		& (x,y,z)& \longmapsto & (x,y).
		\end{array}$
	\end{center}
\end{definition}
This projection has the property that maps immersed Legendrian curves in $(\R^3,\xi_\std)$ to immersed curves in $\R^2$. In addition, the $z$--coordinate can be recovered by integration:
\[z(t_1)=z(t_0)+\int_{t_0}^{t_1}y(s)x'(s)ds.\] 
Thus, in order for a closed curve in $\R^2$ to lift to a closed Legendrian, it is necessary that the closed immersed disk that bounds the curve has zero (signed) area.

On the other hand we have:

\begin{definition}
We define the  \em front projection \em as 
	\begin{center}
		$\begin{array}{rccl}
		\pi_{F}\colon & \R^3 & \longrightarrow & \R^2\\
		& (x,y,z)& \longmapsto & (x,z).
		\end{array}$
	\end{center}
\end{definition}
We can recover the $y$--coordinate by differentiating:
\[y(s)=\frac{z'(s)}{x'(s)}.\]
The cases $x'(s_0)=z'(s_0)=0$ are allowed as soon as the limit is well defined.

\subsection{Classical invariants.}

There are three \em classical invariants \em of Legendrian embeddings that we will introduce (for simplicity) only in the context of $(\R^3,\xi_\std)$ and $(\NS^3,\xi_\std)$. The first one is the \em smooth knot type of the embedding\em, which is purely topological.

Let $(M,\xi)$ be $(\R^3,\xi_\std)$ or $(\NS^3,\xi_\std)$. Let $\gamma\in\Leg(M,\xi)$ be a Legendrian embedding.
We call \em contact framing \em to the trivialization of its normal bundle $\nu(\gamma)$ given by the Reeb vector field along the knot. We call \em topological framing \em of $\gamma$ to the framing $\F_{\Top}$ of $\nu(\gamma)$ defined by any Seifert surface of $\gamma$.

\begin{definition}
The \em Thurston-Bennequin \em invariant of $\gamma$, denoted by $\tb(\gamma)$, is the twisting of the contact framing with respect to the topological framing. %I.e. \[\tb(\gamma)=\deg \F_{\Top},\]
%where, trivializing $\nu(\gamma)$ with the Reeb vector field, we understand the topological framing as $\F_{\Top}:\NS^1\rightarrow\R^2\backslash\{0\}$.
\end{definition}

Fix a global trivialization of $\xi$, this election is unique up to homotopy since $\pi_0(\Maps(M,\NS^1))=0$ for the two particular manifolds that we are  studying. Thus, the derivative of the Legendrian embedding defines a map $\gamma':\NS^1\rightarrow\R^2\backslash\{0\}$.

\begin{definition}
The \em rotation number \em of $\gamma\in\Leg(M,\xi)$ is \[\Rot(\gamma)=\deg\gamma'.\]
\end{definition}

It follows that the rotation number is well defined and independent of the trivialization of $\xi$. In the case that $\xi$ is non--trivial, we can define the rotation number for null-homologous knots by just taking a Seifert surface $\Sigma$ on a fixed homology class and selecting a framing for $\xi_{|\Sigma}$  that induces a framing over the boundary by restriction. The induced framing on $\xi_{|\gamma}$  is independent of the choice (see \cite{GeigesBook}, Proposition $3.5.15$).
 
%These invariants have concrete formulas when looking at the projections of the knot via certain privileged projections onto $\R^2$. There they can be computed by looking at the diagrams of the knot in a combinatorial way. A result relating these invariants in $\R^3$ is the following one:
There is an important result relating the three classical invariants of Legendrian embeddings in $(\R^3,\xi_\std)$ and $(\NS^3,\xi_\std)$.

\begin{proposition}[Bennequin's inequality, \cite{Bennequin}]\label{prop:Bennequin}
	Let $\chi(\Sigma)$ denote the Euler characteristic of a Seifert surface $\Sigma$ for the Legendrian embedding $\gamma$. Then the following inequality holds:
\begin{equation}\label{eq:Bennequin}
\tb(\gamma)+|\Rot(\gamma)|\leq -\chi(\Sigma).
\end{equation}
\end{proposition}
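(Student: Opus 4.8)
The plan is to study the \emph{characteristic foliation} $\Sigma_\xi$ cut out on a Seifert surface $\Sigma$ by the contact structure, i.e. the singular line field $T\Sigma\cap\xi$. First I would take a $C^\infty$-small perturbation of $\Sigma$, fixing the Legendrian boundary $\gamma=\partial\Sigma$, so that $\Sigma_\xi$ becomes generic: all singular points nondegenerate (elliptic or hyperbolic) and the foliation Morse--Smale. Each singular point $p$, where $\xi_p=T_p\Sigma$ as planes, carries a sign $\pm$ according to whether the coorientation of $\xi$ agrees with the orientation of $\Sigma$ at $p$; I write $e_\pm,h_\pm$ for the numbers of positive/negative elliptic and hyperbolic points and $\Sigma_\pm$ for the corresponding regions, so that by Poincar\'e--Hopf $\chi(\Sigma_\pm)=e_\pm-h_\pm$ and $\chi(\Sigma)=\chi(\Sigma_+)+\chi(\Sigma_-)$.

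The second step is to express the classical invariants through these signed counts. Viewing the directing field $X$ of $\Sigma_\xi$ as a section of $\xi|_\Sigma$ (legitimate since $X\in\xi$) and noting that $X|_\gamma=T\gamma$, its relative Euler number against a global trivialization of $\xi$ computes (up to sign conventions) $\Rot(\gamma)=e_+-e_--h_++h_-$. To handle $\tb(\gamma)$ cleanly and to absorb the degenerate boundary behaviour of $\Sigma_\xi$ along the Legendrian $\gamma$, I would pass to the transverse pushoffs $\gamma_\pm$ of $\gamma$, for which $\operatorname{sl}(\gamma_\pm)=\tb(\gamma)\mp\Rot(\gamma)$, and reduce to the transverse Bennequin inequality $\operatorname{sl}(T)\le-\chi(\Sigma)$. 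For a transverse knot the foliation meets $\partial\Sigma$ transversally, the index count is unobstructed, and it gives $\operatorname{sl}(T)=\chi(\Sigma_-)-\chi(\Sigma_+)=2\chi(\Sigma_-)-\chi(\Sigma)$, so the desired bound is equivalent to the single statement $\chi(\Sigma_-)\le 0$.

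To establish $\chi(\Sigma_-)\le0$ I would invoke Giroux's Elimination Lemma to cancel elliptic--hyperbolic pairs of the same sign joined by a leaf; this simplifies $\Sigma_\xi$ without altering $\chi(\Sigma_\pm)=e_\pm-h_\pm$. It then suffices to rule out a component of $\Sigma_-$ of positive Euler characteristic, i.e. a disk: after elimination such a disk is bounded by a closed leaf of $\Sigma_\xi$ and is therefore an overtwisted disk. Hence $\chi(\Sigma_-)\le0$, giving $\operatorname{sl}(T)\le-\chi(\Sigma)$; applying this to both pushoffs yields $\tb(\gamma)+|\Rot(\gamma)|=\max\bigl(\operatorname{sl}(\gamma_+),\operatorname{sl}(\gamma_-)\bigr)\le-\chi(\Sigma)$, which is exactly (\ref{eq:Bennequin}).

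The main obstacle is precisely this last geometric step: the exclusion of overtwisted disks, that is, the \emph{tightness} of $(\R^3,\xi_\std)$ and $(\NS^3,\xi_\std)$. This is the genuinely non-formal content of Bennequin's theorem, and it cannot be bypassed, since the inequality is false in overtwisted contact structures: no purely homological bookkeeping with the indices $e_\pm,h_\pm$ can suffice. Everything else—the genericity of $\Sigma_\xi$, the index/relative Euler-number formulas, the pushoff relations, and the cancellation of paired singularities—is routine once tightness is in hand. I would therefore either cite the tightness of the standard structures as the essential input, or, for a self-contained account, replace this step by Bennequin's original braid-theoretic analysis of the Seifert surface.
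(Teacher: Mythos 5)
Your proposal is correct in outline, but note that the paper does not actually prove this statement: Proposition \ref{prop:Bennequin} is quoted as a classical result with a bare citation to Bennequin's \emph{Entrelacements et \'equations de Pfaff}, so there is no in-paper argument to match yours against. What you sketch is the later Eliashberg--Giroux proof via characteristic foliations, and it is the standard modern route: perturb $\Sigma$ to make $\Sigma_\xi$ generic, reduce to the transverse bound $\operatorname{sl}(T)\leq-\chi(\Sigma)$ through the pushoffs $\gamma_\pm$ with $\operatorname{sl}(\gamma_\pm)=\tb(\gamma)\mp\Rot(\gamma)$, and kill negative elliptic points (equivalently, show $e_--h_-\leq0$, your $\chi(\Sigma_-)\leq0$) using the Elimination Lemma plus tightness. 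Bennequin's cited proof is entirely different: a braid-theoretic analysis of Seifert surfaces for closed braids, with no characteristic foliations, and it is what \emph{established} tightness of $(\R^3,\xi_\std)$ in the first place. This is the one point where your write-up deserves care: if you ``cite the tightness of the standard structures as the essential input,'' you must cite a proof of tightness independent of Bennequin's inequality (e.g.\ fillability of $(\NS^3,\xi_\std)$ via Gromov--Eliashberg holomorphic disk theory), or the argument is circular; you partially acknowledge this by offering the braid-theoretic alternative. Two smaller bookkeeping caveats, both standard: the additivity $\chi(\Sigma)=\chi(\Sigma_+)+\chi(\Sigma_-)$ requires the dividing set to consist of circles, which is exactly why passing to the transverse pushoff before decomposing is the right move; and the final ``disk component of $\Sigma_-$ yields an overtwisted disk'' step is cleanest phrased via Giroux's criterion after an isotopy making $\Sigma$ convex, rather than directly for an arbitrary Morse--Smale foliation. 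With those caveats your reduction is sound, and it in fact proves more than the quoted statement, namely the Bennequin inequality in any tight contact $3$--manifold.
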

\subsection{Invariants for loops of Legendrian embeddings.}
 We consider parametrized loops, when we quotient by the parameter we explicitely mention it. Let $(M,\xi)$ be  $(\R^3,\xi_\std)$ or $(\NS^3,\xi_\std)$.
 We can also define certain (formal) invariants for loops of Legendrian embeddings $\gamma^\theta$ in $\Leg(M,\xi)$. The first invariant is the \em homotopy class of the loop \em of smooth embeddings, i.e. $[\gamma^\theta]\in\pi_1\left(\Emb(\NS^1,M)\right)$, where $\Emb(N_1,N_2)$ denotes the space of embeddings of a manifold $N_1$ into another manifold $N_2$.
 
 The second invariant for loops of Legendrians embeddings is the following
\begin{definition}
The \em rotation number of the loop \em $\gamma^\theta$ is
\[\Rot_{\pi_1}(\gamma^\theta)=\deg(\theta\mapsto(\gamma^\theta)'(0)).\]
\end{definition}

In order to define a different invariant, we assume that there exists a loop of Seifert surfaces $\Sigma^\theta$ for $\gamma^\theta$. Thus, we have a \em loop \em of topological framings for $\nu(\gamma^\theta)$. By means of the Reeb vector field, we can understand this loop as a $\F^{\theta}_{\Top}:\NS^1\rightarrow\R^2\backslash\{0\}$.

\begin{definition}
	The \em Thurston-Bennequin number of the loop \em $\gamma^\theta$ is 
	\[ \tb_{\pi_1} (\gamma^\theta)=-\deg(\theta\rightarrow\F^{\theta}_{\Top}(0)).\] 
\end{definition}

The Thurston-Bennequin number is not necessarily well defined for all the loops of Legendrian embeddings, since we are assuming that we have a loop of topological framings. The key point is the existence and uniqueness of such a loop. 

Assume that a loop of smooth embeddings $\gamma^\theta$ is homotopically trivial, i.e. there is $\gamma^z$, $z\in\D$, whose boundary is $\gamma^\theta$. Thus, there is a unique topological framing on $\gamma^z_{|z=0}$ induced by any choice of Seifert surface. This induces a unique $1$--parametric family of topological framings on the loop $\gamma^\theta$. Now, assume that we have a $2$-sphere $S$ of smooth embeddings. If we understand it as the union of two disks we get by the previous construction two possibly different topological framings for the equator $\gamma^\theta$. The difference between the two loops is measured by an integer $d(S)$. We conclude that we have a well defined morphism of abelian groups $d: \pi_2(\Emb(\NS^1, M)) \rightarrow \Z;S\mapsto d(S)$. Obviously $\im (d)=k_0\Z$ for some integer $k_0$.

Recall that there is a natural left action of $\Diff^+(M)$ on $\Emb(\NS^1,M)$. Denote by $\Phi_\gamma:\Diff^+(M)\rightarrow\Emb(\NS^1,M)$ the orbit of an embedding $\gamma$ under the action.

We can state the following proposition that is obviously true by the previous discussion. 

\begin{proposition}\label{prop:TBWellDefined}
	Let $\gamma^\theta$ be a loop Legendrian embeddings. In any  of the two following cases $\tb_{\pi_1}$ is a well defined invariant:
	\begin{enumerate}
		\item [(A)] If $\pi_1(\Phi_\gamma)$ is an monomorphism and $[\gamma^\theta]\in\im(\pi_1(\Phi_\gamma))$ or
		\item [(B)] $\gamma^\theta$ is trivial as a loop of smooth embeddings. In this case $\tb_{\pi_1}(\gamma^\theta)\in\Z/k_0\Z$.
	\end{enumerate}
\end{proposition}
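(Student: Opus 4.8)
The quantity $\tb_{\pi_1}(\gamma^\theta)=-\deg(\theta\mapsto\F^\theta_{\Top}(0))$ depends \emph{a priori} on the auxiliary loop of Seifert surfaces $\Sigma^\theta$ used to produce the loop of topological framings $\F^\theta_{\Top}$, so the plan is to show that in each of the two situations this dependence either disappears or collapses onto $k_0\Z$. The one genuinely geometric input, used throughout, is that the topological framing of a \emph{single} null-homologous knot is the $0$--framing, and is therefore independent of the chosen Seifert surface; consequently the homotopy class of $\F_{\Top}$ as a framing of $\nu(\gamma)$ is a locally constant assignment on $\Emb(\NS^1,M)$, so it can be propagated continuously along any path or homotopy of embeddings. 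Everything else reduces to the homotopy invariance of the degree.

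For case (A) I would first use the hypothesis $[\gamma^\theta]\in\im(\pi_1(\Phi_\gamma))$ to fix a based loop $\psi^\theta\in\Diff^+(M)$ with $\pi_1(\Phi_\gamma)([\psi^\theta])=[\gamma^\theta]$. Choosing any Seifert surface $\Sigma$ for $\gamma$, the family $\psi^\theta(\Sigma)$ is a canonical loop of Seifert surfaces for $\psi^\theta\circ\gamma$, and thus yields a loop of topological framings and a value of $\tb_{\pi_1}$. I would then check that this value is unchanged if $\Sigma$ is replaced by another Seifert surface (surface--independence of the $0$--framing) and if $\psi^\theta$ is replaced by a homotopic loop (apply the homotopy to $\Sigma$ and invoke homotopy invariance of the degree); hence it depends only on $[\psi^\theta]\in\pi_1(\Diff^+(M))$. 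The assumption that $\pi_1(\Phi_\gamma)$ is a monomorphism is precisely what forces $[\psi^\theta]$ to be uniquely determined by $[\gamma^\theta]$, so $\tb_{\pi_1}$ becomes a well defined integer attached to the class $[\gamma^\theta]$, with no disk--filling and hence no sphere ambiguity.

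For case (B) I would start from a disk $\gamma^z$, $z\in\D$, bounding $\gamma^\theta$, pick a Seifert surface at the center $z=0$, and propagate the resulting $0$--framing continuously over $\D$. Since $\D$ is simply connected the framing remains in the Seifert component and is unique up to a homotopy that is trivial on $\partial\D$ (two propagations differ by a map $\D\to\Maps(\NS^1,\GL_2^+)$ into the identity component, which is nullhomotopic because $\D$ is contractible, hence boundary--trivial), so the restriction to $\partial\D$ gives a loop of framings whose degree is independent of the chosen propagation. The only residual ambiguity is the choice of filling disk: two disks with the same boundary $\gamma^\theta$ glue to a $2$--sphere $S\in\pi_2(\Emb(\NS^1,M))$, and by the construction recalled just before the statement the two boundary framings differ exactly by $d(S)\in\im(d)=k_0\Z$. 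Therefore $\tb_{\pi_1}(\gamma^\theta)$ is well defined in $\Z/k_0\Z$.

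I expect the main obstacle to be the continuity and local--constancy claim for the topological framing that underlies all three reductions, namely that the $0$--framing can be selected continuously in families and, over a simply connected parameter space, uniquely up to a boundary--trivial homotopy; once this is in place the proposition follows formally from homotopy invariance of the degree and the definition of $d$. The remaining points — surface--independence, the passage from $\psi^\theta$ to framings, and the gluing of two disks into a sphere — are routine bookkeeping.
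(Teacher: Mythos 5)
Your proposal is correct and follows exactly the route the paper intends: the paper offers no separate argument, declaring the proposition ``obviously true by the previous discussion,'' and your proof is precisely that discussion made rigorous --- for (B), propagating the Seifert framing over a capping disk and identifying the ambiguity from two fillings with $d(S)\in\im(d)=k_0\Z$; for (A), lifting $[\gamma^\theta]$ through the orbit map $\Phi_\gamma$, transporting a fixed Seifert surface by the loop of diffeomorphisms, and using injectivity of $\pi_1(\Phi_\gamma)$ to make the lift, hence the framing loop, canonical. Your explicit treatment of the continuity/uniqueness of the propagated framing (the difference map $\D\to\Maps(\NS^1,\GL_2^+)$ being boundary-trivial) is a detail the paper leaves implicit, so nothing is missing relative to the paper's own justification.
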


\begin{remark}
	Assume that $\gamma^\theta$ satisfies condition (A) and condition (B). Then, $k_0=0$ and $\tb_{\pi_1}^B(\gamma^\theta)=\tb_{\pi_1}^A(\gamma^\theta)\in\Z$. The reason is that condition (B) provides a capping disk $\gamma^{r,\theta}$ for $\gamma^\theta$. Now the segment $\gamma^{r,\theta_0}$, by the \em Isotopy Extension Theorem, \em can be represented by a segment of diffeomorphisms $\Psi_{r,\theta_0}$, i.e. $\Psi_{0,\theta_0}=\Id$ and $\Psi_{r,\theta_0}(\gamma^0)=\gamma^{r,\theta_0}$. Thus, $\Psi_{1,\theta}(\gamma^0)=\gamma^\theta$ and the two definitions do coincide. This also implies $k_0=0$. For this reason, we do not specify if we are considering the type $(A)$ invariant or the type $(B)$ invariant in the discussion that follows.
\end{remark}

Finally, we can state the following useful formulas
\begin{proposition}\label{prop:FormulaParametrization}
	Let $\gamma^\theta$ be a loop of Legendrian embeddings. For each $k\in\Z$ define the reparametrizations  $\gamma^{\theta,k}(t):=\gamma^\theta(t-k\theta)$. Then,
	\begin{itemize}
		\item [(i)] $\Rot_{\pi_1} (\gamma^{\theta,k})=\Rot_{\pi_1}(\gamma^\theta)-k\Rot(\gamma^0)$,
		\item[(ii)] $\tb_{\pi_1} (\gamma^{\theta,k})=\tb_{\pi_1}(\gamma^\theta)-k\tb(\gamma^0)$, whenever the $\tb_{\pi_1}$ is well defined for both loops.
	\end{itemize}
\end{proposition}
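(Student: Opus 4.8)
The plan is to treat both invariants uniformly as winding numbers of a single map defined on the torus $\NS^1_\theta\times\NS^1_t$, where $\theta$ is the loop parameter and $t$ is the parameter of the embedded circle, and then to exploit the additivity of the degree over $H_1(\NS^1_\theta\times\NS^1_t)$. For part (i), I would fix the global trivialization of $\xi$ used to define the rotation numbers and consider the smooth map
\[ F\colon \NS^1_\theta\times\NS^1_t\longrightarrow \R^2\setminus\{0\},\qquad F(\theta,t)=(\gamma^\theta)'(t). \]
By definition $\Rot(\gamma^0)$ is the degree of the restriction of $F$ to the $t$--circle $\{0\}\times\NS^1_t$ (continuity of the loop makes $\Rot(\gamma^\theta)$ an integer depending continuously on $\theta$, hence constant), while $\Rot_{\pi_1}(\gamma^\theta)$ is the degree of the restriction of $F$ to the $\theta$--circle $\NS^1_\theta\times\{0\}$. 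Differentiating the reparametrization in $t$ gives $(\gamma^{\theta,k})'(0)=(\gamma^\theta)'(-k\theta)=F(\theta,-k\theta)$, so $\Rot_{\pi_1}(\gamma^{\theta,k})$ is the degree of $F$ along the curve $c_k(\theta)=(\theta,-k\theta)$.

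Next I would invoke that the induced homomorphism $F_*\colon H_1(\NS^1_\theta\times\NS^1_t)\to H_1(\R^2\setminus\{0\})\cong\Z$ computes these degrees on homology classes. Writing $a=[\NS^1_\theta\times\{0\}]$ and $b=[\{0\}\times\NS^1_t]$ for the standard generators, the reparametrization curve has class $[c_k]=a-kb$, because $\theta$ winds once in the first factor and $-k$ times in the second. Linearity of $F_*$ then yields
\[ \Rot_{\pi_1}(\gamma^{\theta,k})=F_*(a-kb)=F_*(a)-kF_*(b)=\Rot_{\pi_1}(\gamma^\theta)-k\Rot(\gamma^0), \]
which is (i).

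For part (ii) I would run the identical argument with the loop of topological framings in place of the tangent field. Under the hypothesis that $\tb_{\pi_1}$ is well defined for both loops, the loop of framings assembles into a globally defined map $G(\theta,t)=\F^\theta_{\Top}(t)$ on the torus, and the single--knot and loop Thurston--Bennequin numbers are, up to the sign built into their definitions, the degrees of $G$ along the $t$--circle and the $\theta$--circle respectively, so that $\tb(\gamma^0)=-\deg(G|_{\{0\}\times\NS^1_t})$ and $\tb_{\pi_1}(\gamma^\theta)=-\deg(G|_{\NS^1_\theta\times\{0\}})$. The point to verify is that the reparametrization shifts the framing: since the topological framing depends only on the image knot together with its Seifert surface, and $\gamma^{\theta,k}$ has the same image and Seifert surface as $\gamma^\theta$, one gets $\F^{\theta,k}_{\Top}(0)=\F^\theta_{\Top}(-k\theta)=G(\theta,-k\theta)$. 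The same homological computation $G_*(a-kb)=G_*(a)-kG_*(b)$, with the overall sign tracked through, then gives $\tb_{\pi_1}(\gamma^{\theta,k})=\tb_{\pi_1}(\gamma^\theta)-k\tb(\gamma^0)$.

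The main obstacle is conceptual rather than computational: one must be sure that all four quantities appearing in each identity are genuinely restrictions of one map on the torus, so that additivity of the degree over $H_1$ legitimately applies, and one must correctly identify the reparametrization with the homology class $a-kb$. For (ii) this additionally relies on the loop of framings being defined consistently over the whole torus, which is precisely the well--definedness hypothesis, together with the essentially formal observation that the topological framing is an invariant of the unparametrized knot and therefore transforms by the shift $t\mapsto t-k\theta$ under the reparametrization.
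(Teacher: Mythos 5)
Your proof is correct, and in fact the paper states Proposition \ref{prop:FormulaParametrization} without any proof, treating it as an evident winding-number computation; your argument is precisely the natural formalization of that computation, packaging $\Rot$, $\Rot_{\pi_1}$, $\tb$, $\tb_{\pi_1}$ as degrees of a single map on the torus and identifying the reparametrization curve with the class $a-kb$ in $H_1(\mathbb{S}^1_\theta\times\mathbb{S}^1_t)$. The only genuinely delicate point---that the topological framing loop of $\gamma^{\theta,k}$ is the shift $\mathcal{F}^\theta_{\Top}(t-k\theta)$ of the original one, so that all four quantities in (ii) really are degrees of restrictions of one map $G$ on the torus---you handle correctly by observing that the framing is attached to the unparametrized image knot and its family of Seifert surfaces, which is unchanged by the reparametrization, and the well-definedness hypothesis covers the remaining ambiguity in the choice of framing family.
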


\begin{remark}
	Hatcher's work about knot spaces in $\NS^3$ implies that $\tb_{\pi_1}$ is well defined for many cases:
	\begin{itemize}
		\item The connected component $\Emb_0 (\NS^1,\NS^3)$ of the trivial embedding has the homotopy type of $V_{4,2}$, the space of parametrized great circles (see \cite{HatcherSmale}, Appendix: Equivalence $(6)$). Note that, $\pi_1 (V_{4,2})=0$ and $\pi_2 (V_{4,2})\cong \Z$. In this case $\tb_{\pi_1}\in\Z$ is always defined since $k_0=0$. Indeed, consider $\NS^3$ as a submanifold of the quarternions $\R^4(i,j,k)$, the generator of $\pi_2(\Emb_0 (\NS^1,\NS^3))$ is the $2$--sphere $\{\gamma_p:p\in\NS^2(i,j,k)\}$, meaning that $\gamma_p(\theta) = \cos \theta + p \sin \theta$. It is clear that a normal framing for $\gamma_i$ is given by $\tau_i(\theta)= \langle j, k \rangle$. We choose as equator the loop of curves $\gamma_p$ with $p\in \NS^1(j,k)$. We look for a family of diffeomorphisms $A_{p,r}: \R^4 \to \R^4$, $(p,r) \in \NS^1(j,k) \times [0,1]$, such that $A_{p,r}(\gamma_i)= \gamma_{(\cos \frac\pi2 r)  i + (\sin \frac\pi2 r) p}$ and $A_{p,r}(1)=1$. We can, indeed, choose $A_{p,r}$ to be the linear rotation of angle $\frac\pi2 r$ around the axis defined by $\langle 1, i\cdot p \rangle$. With this choice it is clear that the $\NS^1$-parametric  family of knots $\gamma_p= A_{p,1} \gamma_i$ has an associated family of framings $\tau_p= \langle (\cos \phi) i + \sin \phi ((\sin \phi) j -(\cos \phi)k),  (\sin \phi) i + \cos \phi ((\sin \phi) j -(\cos \phi) k) \rangle$, with $p = (\cos \phi) j +( \sin \phi) k$. Choosing the south pole $-i \in \NS(i,j,k)$, we define the framing $\tau_{-i}(\theta)= \langle -j, -k \rangle$. We repeat the previous process but taking a family of linear isomorphisms preserving the axis $\langle 1, -i\cdot p \rangle$. Obviously, we get a $\NS^1$--family of framings $\tilde \tau_p$ that satisfies $\tilde \tau_p = \tau_p$. We have proven that $d([\gamma_p])=0$.
		
		\item The connected component $\Emb_{p,q}(\NS^1,\NS^3)$ of a non--trivial parametrized  $(p,q)$ torus knot has the homotopy type of $\SO(4)$ and the homotopy equivalence is induced by the action $\Diff^+(\NS^3)\rightarrow \Emb_{p,q}(\NS^1,\NS^3)$ (see \cite{HatcherKnots}, Theorem $1$). Recall that $\Diff^+ (\NS^3)$ is homotopy equivalent to $\SO(4)$ (see \cite{HatcherSmale}). In this case, $\tb_{\pi_1}\in\Z$ is always defined.
		
		\item The connected component $\Emb'(\NS^1,\NS^3)$ of an hyperbolic parametrized knot $\gamma$ has the homotopy type of $\NS^1\times\SO(4)$ (see \cite{HatcherKnots}, Theorem $1$). In particular, $\Emb'(\NS^1,\NS^3)$ has trivial second homotopy group. Moreover, the map $\pi_1(\Phi_\gamma)$ is injective. Thus, $\tb_{\pi_1}\in \Z$ is well defined for loops in $\im(\Phi_\gamma)$ and for smoothly trivial loops.
	\end{itemize}
\end{remark}

\section{The formal viewpoint.}
Following \cite{FMP} we study Legendrians from a formal viewpoint in $(\R^3,\xi_\std)$ and $(\NS^3,\xi_\std)$. The assumption of restricting to the standard structures is only made for simplicity in the statements\footnote{In fact, the statements follow for any contact structure on $\R^3$ or $\NS^3$.}. For a more general discussion see \cite{Murphy}.

\subsection{Formal Legendrian Embeddings.}

\begin{definition}
	Let $(M,\xi)$ be $(\R^3,\xi_\std)$ or $(\NS^3,\xi_\std)$. A \em formal Legendrian embedding \em into $(M,\xi)$ is a pair $(\gamma,F_s)$ such that
	\begin{itemize}
		\item [(i)] $\gamma:\NS^1\rightarrow M$ is an embedding,
		\item [(ii)] $F_s: \NS^1\rightarrow \gamma^* (TM\backslash\{0\})$, $s\in[0,1]$, is a homotopy between $F_0=\gamma'$ and $F_1:\NS^1\rightarrow\gamma^* (\xi\backslash\{0\})\subseteq\gamma^*(TM\backslash\{0\})$.
	\end{itemize}
\end{definition}

Trivialize $TM$ and $\xi$. From now on we understand $F_s:\NS^1\rightarrow \NS^2$ and $F_1:\NS^1\rightarrow\NS^1=\NS^2\cap\xi$. On $(\R^3(x,y,z),\xi_\std)$ we fix the framing $\xi_\std=\langle \partial_x+y\partial_z,\partial_y\rangle$. On $(\NS^3,\xi_\std)$ we fix the framing given by $\xi_\std (p)=\langle jp,kp\rangle$, where we are using quarternionic notation, i.e. $\NS^3\subseteq \R^4(i,j,k)$.

Denote by $\FLeg(M,\xi)$ the space of formal Legendrian embeddings. In order to study the homotopy type of the space of formal Legendrians we introduce the following auxiliary space $\FLegO(M,\xi)=\Emb(\NS^1,M)\times L\NS^1$, where $LX$ denotes the free loop space of a connected manifold $X$. Recall that $LX$ has the homotopy type of $X\rtimes\Omega_p (X)$ and, moreover, if $X$ is a Lie group then $LX\cong X\times\Omega_1 (X)$. We have a natural fibration 
\begin{center}
	$\begin{array}{rccl}
	f:\FLeg(M,\xi)&\longrightarrow &  \FLegO(M,\xi)\\
	(\gamma,F_s)& \longmapsto & (\gamma,F_1).
	\end{array}$
\end{center}
The morphism $f$ is surjective. Indeed, given $(\gamma,F_1)$ we need to find a homotopy between $\gamma'$ and $F_1$ inside $L\NS^2$. Since $F_1$ is null homotopic (in $L\NS^2$) by the Legendrian condition, this is equivalent to say that $\gamma'$ is null homotopic which is true, by dimensional reasons, for every embedding $\gamma\in\Emb(\NS^1,M)$. Fix as base point $(\gamma,\gamma')\in\FLegO(M,\xi)$, with $\gamma\in\Leg(M,\xi)$. The fiber over this point is $\Omega_{\gamma'}(L\NS^2)$. Denote the diagonal maps in the associated long exact sequence in homotopy by $\partial_k: \pi_k(\FLegO(M,\xi))\rightarrow\pi_{k-1}(\Omega_{\gamma'}(L\NS^2))\cong\pi_k (L\NS^2)$. Recall that, by the Smale-Hirsch $h$--principle for immersions (\cite{Hirsch}), $\Imm(\NS^1,M)$ has the homotopy type of $LM\times L\NS^2$. Let $p_2:LM\times L\NS^2\rightarrow L\NS^2$ be the projection onto the second factor and $i:\Emb(\NS^1,M)\hookrightarrow\Imm(\NS^1,M)$ the natural inclusion. We have the following
\begin{lemma}\label{lem:DiagonalMaps}
	The homorphisms $\partial_k$ and $\pi_k (p_2)\circ \pi_k (i)$ coincide. More precisely, if $(\gamma^z,F_{1}^{z})\in\pi_k(\FLegO(M,\xi))$ then 
	\[ \partial_k (\gamma^z,F_{1}^{z})=\pi_k (p_2)\circ \pi_k (i) (\gamma^z)\in\pi_k (L\NS^2). \]
\end{lemma}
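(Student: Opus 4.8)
The plan is to recognize that the definition of a formal Legendrian embedding already exhibits $\FLeg(M,\xi)$ as a homotopy pullback, and then to read off $\partial_k$ from this description. After the chosen trivializations a formal Legendrian $(\gamma,F_s)$ consists of an embedding $\gamma$, a loop $F_1\in L\NS^1=L(\NS^2\cap\xi)$, and a path $s\mapsto F_s$ in $L\NS^2$ joining $\gamma'$ to $j(F_1)$, where $j\colon L\NS^1\hookrightarrow L\NS^2$ is induced by the inclusion $\NS^1=\NS^2\cap\xi\hookrightarrow\NS^2$. Thus $\FLeg(M,\xi)$ is exactly the space of triples $(\gamma,F_1,(F_s))$ witnessing $D(\gamma)\simeq j(F_1)$, i.e. the homotopy pullback of
\[ \Emb(\NS^1,M)\xrightarrow{\ D\ } L\NS^2\xleftarrow{\ j\ } L\NS^1, \]
and $f$ is precisely the projection onto $\Emb(\NS^1,M)\times L\NS^1=\FLegO(M,\xi)$. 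Here $D\colon\gamma\mapsto\gamma'$ is the derivative map; recall that under the Smale–Hirsch equivalence $\Imm(\NS^1,M)\simeq LM\times L\NS^2$ the second coordinate of an immersion is the homotopy class of its trivialized derivative, so that $D=p_2\circ i$ after this identification.

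For a fibration arising as the projection $W\to X\times Y$ of a homotopy pullback of $X\xrightarrow{a}Z\xleftarrow{b}Y$ (whose fiber is $\Omega Z$), the connecting homomorphism $\pi_k(X\times Y)\to\pi_{k-1}(\Omega Z)\cong\pi_k(Z)$ is the difference $(\alpha,\beta)\mapsto a_*\alpha-b_*\beta$. I would verify this in our situation directly: representing a class by $\phi\colon(\D^k,\partial\D^k)\to(\FLegO,(\gamma,\gamma'))$ with $\gamma\in\Leg(M,\xi)$, I would build a lift to $\FLeg$ over the contractible disk $\D^k$ by concatenating, for each $z$, a path from $(\gamma^z)'$ to $\gamma'$ obtained by following a contraction of $\D^k$ through the derivatives $(\gamma^{\bullet})'$, with a path from $\gamma'$ to $F_1^z$ obtained by following the same contraction through $F_1^{\bullet}$. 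Restricting this lift to $\partial\D^k$ gives a representative of $\partial_k(\gamma^z,F_1^z)\in\pi_{k-1}(\Omega_{\gamma'}L\NS^2)\cong\pi_k(L\NS^2)$ whose first half contributes $D_*[\gamma^z]=\pi_k(p_2)\circ\pi_k(i)(\gamma^z)$ and whose second half contributes $-j_*[F_1^z]$. The step needing the most care is this identification of the two concatenated halves with $D_*$ and $j_*$ together with the fiber isomorphism $\pi_{k-1}(\Omega_{\gamma'}L\NS^2)\cong\pi_k(L\NS^2)$ and the attendant sign; this is where I expect the main bookkeeping obstacle to lie.

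Finally I would kill the $F_1$-contribution. Since $\NS^1=\NS^2\cap\xi$ bounds a disk in $\NS^2$ and $\pi_1(\NS^2)=0$, the inclusion $\NS^1\hookrightarrow\NS^2$ is null-homotopic; post-composition then shows that $j\colon L\NS^1\to L\NS^2$ is null-homotopic, whence $j_*=0$ on every homotopy group. Therefore $-j_*[F_1^z]=0$ and $\partial_k(\gamma^z,F_1^z)=\pi_k(p_2)\circ\pi_k(i)(\gamma^z)$, as claimed. For $k\geq2$ this last vanishing is automatic since $\pi_k(L\NS^1)=0$, but the null-homotopy argument disposes of the relevant case $k=1$ uniformly.
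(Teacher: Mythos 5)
Your proposal is correct and takes essentially the same route as the paper: the paper's proof likewise reads off $\partial_k$ as the difference between the classes of $(\gamma^z)'$ and $F_{1}^{z}$ in $\pi_k(L\NS^2)$ and kills the second term via the Legendrian condition, i.e.\ because $F_{1}^{z}$ factors through $L\NS^1$, whose inclusion into $L\NS^2$ is null-homotopic. Your homotopy-pullback formulation, the disk-lifting verification of the difference formula, and the explicit treatment of the $k=1$ case merely spell out what the paper asserts in three sentences.
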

\begin{proof}
 The image of $(\gamma^z,F_{1}^{z})$ by $\partial_k$ measures the difference between the derivative $(\gamma^z)'$ and $F_{1}^{z}$ as elements in $\pi_k (L\NS^2)$. The homotopy class of $F_{1}^{z}$ is zero by the Legendrian condition. Thus,  $\partial_k (\gamma^z,F_{1}^{z})=(\gamma^z)'\in\pi_k (L\NS^2)$, i.e. $\partial_k (\gamma^z,F_{1}^{z})=\pi_k (p_2)\circ \pi_k (i) (\gamma^z)$.
\end{proof}

With the previous discussion in mind one can conclude the following well known fact, we refer the reader to \cite{FMP} or \cite{Murphy}  for a proof,

\begin{theorem}\label{thm:ClassificationFLeg}
	Formal Legendrian embeddings are classified by their parametrized knot type, rotation number and Thurston--Bennequin invariant.
\end{theorem}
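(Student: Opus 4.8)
The plan is to read off the classification from the long exact sequence in homotopy of the fibration $f\colon\FLeg(M,\xi)\to\FLegO(M,\xi)$ at the level of $\pi_0$. Concretely, I would prove that the invariant map
\[
\Psi\colon \pi_0\big(\FLeg(M,\xi)\big)\longrightarrow \pi_0\big(\Emb(\NS^1,M)\big)\times\Z\times\Z,\qquad [(\gamma,F_s)]\longmapsto \big([\gamma],\Rot,\tb\big),
\]
is a bijection, where the first $\Z$ records $\deg F_1$ and the second records the homotopy class, relative to endpoints, of the path $F_s$ from $\gamma'$ to $F_1$ inside $L\NS^2$.

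First I would identify each term of the sequence with one of the invariants. The base splits as $\pi_0(\FLegO(M,\xi))=\pi_0(\Emb(\NS^1,M))\times\pi_0(L\NS^1)$; the first factor is the parametrized knot type, and since $\pi_0(L\NS^1)=\pi_1(\NS^1)=\Z$ detects the degree of $F_1$, the second factor is exactly the rotation number. For the fibre $\Omega_{\gamma'}(L\NS^2)$ I would use the evaluation fibration $\Omega\NS^2\to L\NS^2\to\NS^2$, which carries the constant--loop section, so that $\pi_0(\Omega_{\gamma'}(L\NS^2))\cong\pi_1(L\NS^2)\cong\pi_1(\NS^2)\oplus\pi_2(\NS^2)\cong\Z$; this integer is the Thurston--Bennequin invariant.

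Surjectivity of $\Psi$ then amounts to realising any triple: choose $\gamma$ in the prescribed knot type, a loop $F_1\in L\NS^1$ of the prescribed degree, and, using that $f$ is surjective together with the free transitive action of $\pi_1(L\NS^2)\cong\Z$ on homotopy classes of paths from $\gamma'$ to $F_1$, a homotopy $F_s$ of the prescribed fibre class. For injectivity I would invoke exactness of the sequence of pointed sets
\[
\pi_1\big(\FLegO(M,\xi)\big)\xrightarrow{\ \partial_1\ }\pi_0\big(\Omega_{\gamma'}(L\NS^2)\big)\longrightarrow\pi_0\big(\FLeg(M,\xi)\big)\longrightarrow\pi_0\big(\FLegO(M,\xi)\big):
\]
two formal Legendrians with equal invariants land in the same base component and in the same fibre class up to the image of $\partial_1$, so the triple is a complete set of invariants precisely when $\partial_1$ is trivial, in which case the sequence collapses to a short exact sequence of pointed sets and $\Psi$ is a bijection.

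The main obstacle is therefore the computation of $\partial_1$, and this is exactly the purpose of Lemma~\ref{lem:DiagonalMaps}: it identifies $\partial_1$ with $\pi_1(p_2)\circ\pi_1(i)$, so that $\partial_1$ depends only on the embedding factor and sends a loop of embeddings to the class in $\pi_1(L\NS^2)\cong\Z$ of its family of Gauss maps. Since the fibre is a based loop space, $\partial_1$ is a group homomorphism out of $\pi_1(\Emb(\NS^1,M))$, and I would run through the homotopy types recorded in the remark on Hatcher's work above: on the unknot component $\pi_1$ vanishes; on the torus--knot components $\pi_1\cong\Z/2$ maps trivially to $\Z$; and on a hyperbolic component the only possibly non--trivial generator is the reparametrization loop $\gamma^z(t)=\gamma(t+z)$, whose family of Gauss maps factors as $\NS^1\times\NS^1\to\NS^1\xrightarrow{\gamma'}\NS^2$ and hence has degree $0$. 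This forces $\partial_1=0$ and yields the bijection; the case of $\R^3$ follows in the same way using $\R^3=\NS^3\setminus\{p\}$. I expect the verification that every class in $\pi_1(\Emb(\NS^1,M))$ produces a null--homotopic family of Gauss maps, that is, the triviality of $\partial_1$, to be the genuinely delicate point, everything else being bookkeeping inside the long exact sequence.
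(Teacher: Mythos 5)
Your skeleton is exactly the route the paper has in mind: the paper itself does not write out a proof of Theorem~\ref{thm:ClassificationFLeg} (it sets up the fibration $f\colon\FLeg(M,\xi)\to\FLegO(M,\xi)$, proves Lemma~\ref{lem:DiagonalMaps}, and refers to \cite{FMP} and \cite{Murphy} for the conclusion), and your reduction is the correct bookkeeping: identify $\pi_0(\FLegO)=\pi_0(\Emb(\NS^1,M))\times\Z$ with knot type and rotation number, identify $\pi_0$ of the fibre $\Omega_{\gamma'}(L\NS^2)$ with the $\Z$-torsor measured by $\tb$, and observe that injectivity of $\Psi$ is governed by the action of $\pi_1(\FLegO)$ on $\pi_0$ of the fibre, i.e.\ by the image of $\partial_1$, which Lemma~\ref{lem:DiagonalMaps} identifies with the degree of the Gauss torus map $(z,t)\mapsto(\gamma^z)'(t)\in\NS^2$ of the loop of embeddings. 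You are also right that this is where the real content lies.

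There is, however, a genuine gap precisely at that point: your verification of $\partial_1=0$ runs only through the unknot, torus--knot and hyperbolic components. The remark in the paper quoting Hatcher lists these as cases where $\tb_{\pi_1}$ is well defined; it is \emph{not} an exhaustive classification of the components of $\Emb(\NS^1,\NS^3)$. Satellite knots are missing, and by Hatcher's work \cite{HatcherKnots} their components can have infinite fundamental group with generators of infinite order that are not domain reparametrizations --- for instance, for a cable the loop rotating the knot along its companion torus, or for a connected sum the loop swinging one summand around the other. For such loops your ``factors through $\NS^1$, hence degree $0$'' argument does not apply, and torsion considerations do not help, so your case analysis does not establish $\partial_1=0$ in general. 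The statement is true --- it is equivalent to the surjectivity onto $\pi_1(\Emb(\NS^1,M))\oplus\Z$ in Theorem~\ref{thm:FundamentalGroupFLeg} --- but it requires a uniform argument valid for an arbitrary loop of embeddings (e.g.\ computing the degree of the Gauss torus map at a regular value $v$ by counting tangencies in the direction $\pm v$ as birth/death of front cusps, i.e.\ Reidemeister~I events, and comparing with the vanishing of the total writhe variation around the loop, as in \cite{FMP}). Your reduction of $(\R^3,\xi_\std)$ to $(\NS^3,\xi_\std)$ via $\R^3=\NS^3\setminus\{p\}$ is fine, since the two trivializations are homotopic over the contractible $\R^3$, but of course it only kicks in once the $\NS^3$ case is proved for \emph{all} knot components, satellites included.
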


About the fundamental group of the space of formal Legendrian embeddings we can state the following

\begin{theorem}[\cite{FMP},Theorem $3.4.1$]\label{thm:FundamentalGroupFLeg}
	Let $\gamma\in\Leg(M,\xi)$ be a Legendrian embedding. Fix $(\gamma,\gamma')\in\FLeg(M,\xi)$ as the base point. Then, there exists a number $m\in\Z_{\geq0}$, depending only on the parametrized knot type of $\gamma$, such that the following sequence
		\begin{displaymath}
	\xymatrix@M=10pt{
		0\ar[r] & \Z\oplus\Z_m\ar[r] & \pi_1 (\FLeg(M,\xi_{std})) \ar[r] & \pi_1 (\Emb(\NS^1,M))\oplus\Z\ar[r] & 0 }
	\end{displaymath}
	is exact. Moreover, the last $\Z$ factor corresponds to the $\Rot_{\pi_1}$  invariant.
\end{theorem}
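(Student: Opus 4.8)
The plan is to run the long exact sequence in homotopy of the fibration $f\colon\FLeg(M,\xi)\to\FLegO(M,\xi)$ constructed above, whose fiber over the base point $(\gamma,\gamma')$ is $\Omega_{\gamma'}(L\NS^2)$. Extracting the five terms around degree one gives
\[
\pi_2(\FLegO(M,\xi))\xrightarrow{\partial_2}\pi_2(L\NS^2)\xrightarrow{\,j\,}\pi_1(\FLeg(M,\xi))\xrightarrow{\pi_1(f)}\pi_1(\FLegO(M,\xi))\xrightarrow{\partial_1}\pi_1(L\NS^2),
\]
so that the statement will follow from the induced short exact sequence
\[
0\to\operatorname{coker}(\partial_2)\to\pi_1(\FLeg(M,\xi))\xrightarrow{\pi_1(f)}\ker(\partial_1)\to0,
\]
once both ends are identified. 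First I would record the homotopy groups of the two loop spaces. From the evaluation fibration $\Omega X\to LX\to X$ together with its constant-loop section one gets $\pi_k(LX)\cong\pi_k(X)\oplus\pi_{k+1}(X)$, so $\pi_1(L\NS^2)\cong\Z$, $\pi_2(L\NS^2)\cong\Z\oplus\Z$, while $L\NS^1\simeq\NS^1\times\Z$ yields $\pi_1(L\NS^1)\cong\Z$ and $\pi_2(L\NS^1)=0$. Since $\FLegO=\Emb(\NS^1,M)\times L\NS^1$, this gives $\pi_1(\FLegO)\cong\pi_1(\Emb(\NS^1,M))\oplus\Z$ and $\pi_2(\FLegO)\cong\pi_2(\Emb(\NS^1,M))$, and the composite $\pi_1(\FLeg)\to\pi_1(\FLegO)\to\pi_1(L\NS^1)\cong\Z$, $\gamma^\theta\mapsto\deg(\theta\mapsto F_1^\theta(0))$, is exactly $\Rot_{\pi_1}$, which will identify the last factor.

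Next I would invoke Lemma \ref{lem:DiagonalMaps} to replace both connecting maps by the Smale--Hirsch projection, $\partial_k=\pi_k(p_2)\circ\pi_k(i)$; that is, $\partial_k$ sends a sphere of embeddings to the corresponding sphere of tangent indicatrix loops in $L\NS^2$ and ignores the $L\NS^1$ coordinate. The right-hand end of the statement amounts to $\partial_1=0$. As a homomorphism into the torsion-free group $\pi_1(L\NS^2)\cong\Z$ it kills all torsion of $\pi_1(\Emb(\NS^1,M))$; on the remaining free generators, which by Hatcher's computation are represented by the reparametrization loops $\gamma^\theta(t)=\gamma(t-\theta)$, the tangent indicatrix torus $(\theta,t)\mapsto(\gamma^\theta)'(t)$ factors through the circle $(\theta,t)\mapsto t-\theta$ and hence is null in $\pi_1(L\NS^2)$. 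This forces $\ker(\partial_1)=\pi_1(\FLegO)=\pi_1(\Emb(\NS^1,M))\oplus\Z$ and makes $\pi_1(f)$ surjective, giving both the target and the surjectivity in the statement.

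Finally, the left-hand term is $\operatorname{coker}(\partial_2)=\im(j)=\ker(\pi_1(f))$, the subgroup of loops invisible in $\FLegO$, and pinning it down is the heart of the argument. Here $\partial_2\colon\pi_2(\Emb(\NS^1,M))\to\pi_2(L\NS^2)\cong\Z\oplus\Z$ sends a $2$-sphere of embeddings to its sphere of tangent indicatrices, landing in $\pi_2(\NS^2)\oplus\pi_3(\NS^2)$. I would show its image has rank at most one: for the components in Hatcher's list this is immediate, since $\pi_2(\Emb(\NS^1,M))$ is either $0$ (the torus and hyperbolic types) or $\Z$ (the unknot, generated by the sphere of great circles $\{\gamma_p\}$), so $\partial_2$ has cyclic image. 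By Smith normal form a rank-$\leq1$ subgroup of $\Z^2$ yields $\operatorname{coker}(\partial_2)\cong\Z\oplus\Z_m$, with $m$ the divisibility of the generator of $\im(\partial_2)$, the free summand being detected by $\tb_{\pi_1}$; as $\partial_2$ is determined by $\pi_2(\Emb(\NS^1,M))$ and the fixed map $p_2\circ i$, the integer $m$ depends only on the connected component, i.e. on the parametrized knot type. Exactness of $0\to\operatorname{coker}(\partial_2)\to\pi_1(\FLeg)\to\ker(\partial_1)\to0$ then gives the result. I expect the main obstacle to be precisely this last step: controlling $\im(\partial_2)\subseteq\Z^2$, that is, carrying out on the generators of $\pi_2(\Emb(\NS^1,M))$ the two degree computations (the basepoint-evaluation degree $z\mapsto(\gamma^z)'(0)$ valued in $\pi_2(\NS^2)$ and the Hopf-type invariant in $\pi_3(\NS^2)$), since these are what determine $m$ and require the explicit geometry of the relevant knot component.
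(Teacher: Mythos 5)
Your proposal runs exactly the argument the paper itself gestures at: the long exact sequence of the fibration $f\colon\FLeg(M,\xi)\to\FLegO(M,\xi)$ with fiber $\Omega_{\gamma'}(L\NS^2)$, with both connecting homomorphisms identified through Lemma \ref{lem:DiagonalMaps} as the Smale--Hirsch indicatrix map, the splitting $\pi_k(LX)\cong\pi_k(X)\oplus\pi_{k+1}(X)$, the identification of the last $\Z$ factor with $\Rot_{\pi_1}$ via the $L\NS^1$ coordinate, and the extracted short exact sequence $0\to\operatorname{coker}(\partial_2)\to\pi_1(\FLeg(M,\xi))\to\ker(\partial_1)\to0$. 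The paper states the theorem with a citation to \cite{FMP} and supplies only this skeleton, so in structure your write-up is a faithful expansion of the intended proof, and your bookkeeping (including the degree computation showing that reparametrization loops die in $\pi_1(L\NS^2)\cong\Z$, which survives the basepoint-transport subtlety since the two capping cylinders cancel in degree) is sound.

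There is, however, one genuine gap in generality. Both of your key reductions --- that $\partial_1$ vanishes because torsion plus reparametrization loops account for $\pi_1$ of each component of $\Emb(\NS^1,M)$, and that $\im(\partial_2)\subseteq\Z\oplus\Z$ is cyclic because $\pi_2$ of each component is $0$ or $\Z$ --- are justified by appeal to Hatcher's computations, which (as quoted in the paper's own remark, from \cite{HatcherKnots} and \cite{HatcherSmale}) cover only the unknot, torus--knot and hyperbolic components of $\Emb(\NS^1,\NS^3)$. The theorem is asserted for an arbitrary parametrized knot type, so satellite (in particular composite) knots in $\NS^3$, and the components of $\Emb(\NS^1,\R^3)$ not reducible to these cases, are not covered by the references you invoke: for those components you have established neither that the free part of $\pi_1$ is generated by reparametrization loops nor that $\pi_2$ has cyclic image in $\pi_2(L\NS^2)$, and without these your identifications $\ker(\partial_1)=\pi_1(\Emb(\NS^1,M))\oplus\Z$ and $\operatorname{coker}(\partial_2)\cong\Z\oplus\Z_m$ are unproved. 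This is precisely the content delegated to \cite{FMP}; as written, your argument proves the statement only for the knot types in Hatcher's list (which is all the present paper actually uses), so you should either restrict the claim accordingly or supply a general argument for $\partial_1=0$ and for the rank bound on $\im(\partial_2)$ valid for every component.
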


\section{The action of $\Cont(\NS^3,\xi_\std)$ on the space $\widehat{\Leg}(\NS^{3},\xi_\std)$.}

\subsection{The action of the contactomorphism group on the space of Legendrians.}

Recall that on $\NS^3\subseteq\C^2$ the standard contact structure $\xi_\std$ is defined as the complex tangencies of the $3$--sphere; i.e. $\xi_{\std}(p)=T_p \NS^{3} \cap i(T_p \NS^{3})$, $p\in\NS^{3}$. Thus we have a natural inclusion 
\begin{equation}\label{eq:InclusionU(2)}
\U(2)\hookrightarrow\Cont(\NS^{3},\xi_\std).
\end{equation}
This map has a geometrically left inverse given by the evaluation of the $1$--jet of a contactomorphism at the north pole $N\in \NS^{3}$. Hence, the last inclusion induces an injection in all homotopy groups. Moreover, it is a well known fact (\cite{Eliashberg}) that the inclusion $\U(2)\hookrightarrow \Cont(\NS^3,\xi_\std)$ is a weak homotopy equivalence (see \cite{CS} for a complete proof). 

Consider the restriction of the natural action of the contactomorphism group on the space of Legendrians to the unitary group. Fix a Legendrian $L \in\widehat{\Leg}(\NS^{3},\xi_\std)$. The orbit of this Legendrian is described by the map
\begin{equation}\label{eq:OrbitLegendrian}
\begin{array}{rccl}
\hat{\Phi}_L:&\U(2)&\longrightarrow &  \widehat{\Leg}(\NS^{3},\xi_\std)\\
&A& \longmapsto & A(L).
\end{array}
\end{equation}
Observe that we have an analogous action in the space of Legendrian embeddings. The orbit of $\gamma\in\Leg(\NS^{3},\xi_\std)$ is given by 
\begin{equation}\label{eq:OrbitLegendrianEmbedding}
\begin{array}{rccl}
\Phi_\gamma:&\U(2)&\longrightarrow &  \Leg(\NS^{3},\xi_\std)\\
&A& \longmapsto & A\cdot\gamma.
\end{array}
\end{equation}

\subsection{Homotopy injection of $\Cont(\NS^3,\xi_\std)$ in $\widehat{\Leg}(\NS^3,\xi_\std)$.}

The main result of this section is the following one
\begin{theorem}\label{thm:InjectionUnitaryGroup}
	The map \[ \pi_k (\hat{\Phi}_L):\pi_k (\U(2),\Id)\rightarrow\pi_k (\widehat{\Leg}(\NS^{3},\xi_\std),L) \] is an injection for all $k\geq 2$. Moreover, if one of the following conditions is satisfied
	\begin{itemize}
		\item $\Rot(L)=0$ or
		\item $\tb(L)\neq 0$ and $L$ is an unknot or a torus knot or a hyperbolic knot,
	\end{itemize}
	then the map $ \pi_1 (\hat{\Phi}_L)$ is also an injection.
\end{theorem}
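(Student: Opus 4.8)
The plan is to detect the classes differently in the two ranges of $k$: for $k\ge 2$ I would transport everything into the space of smooth knots and exploit Hatcher's description of knot and diffeomorphism groups, whereas for $k=1$ the smooth picture is too coarse (it only records a $\Z/2$), so I would instead use the formal invariants $\Rot_{\pi_1}$ and $\tb_{\pi_1}$. Throughout I use that $\U(2)\hookrightarrow\Cont(\NS^3,\xi_\std)$ is a weak homotopy equivalence and that $\Diff(\NS^3)\simeq\SO(4)$ (\cite{HatcherSmale}), the inclusion $\U(2)\hookrightarrow\Cont(\NS^3,\xi_\std)\hookrightarrow\Diff(\NS^3)$ being, up to these equivalences, the standard linear inclusion $\iota\colon\U(2)\hookrightarrow\SO(4)$. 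Writing $\widehat{\Emb}(\NS^1,\NS^3):=\Emb(\NS^1,\NS^3)/\Diff^+(\NS^1)$ for the space of unparametrised oriented knots and $K$ for the smooth knot underlying $L$, the key observation is that $\hat\Phi_L$ followed by the forgetful inclusion $j\colon\widehat{\Leg}(\NS^3,\xi_\std)\hookrightarrow\widehat{\Emb}(\NS^1,\NS^3)$ equals the smooth orbit map $A\mapsto A(K)$ precomposed with $\iota$. Hence it suffices to prove injectivity of this composite.

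For $k\ge 2$ I would argue in two steps. First, $\pi_2(\U(2))=0$, so $k=2$ is vacuous; and since $\U(2)\simeq\NS^1\times\NS^3$, for $k\ge 3$ the inclusion $\SU(2)=\NS^3\hookrightarrow\U(2)$ is an isomorphism on $\pi_k$, while the linear inclusion $\SU(2)\hookrightarrow\SO(4)$ lifts to one factor of $\mathrm{Spin}(4)=\NS^3\times\NS^3$, so $\iota_*\colon\pi_k(\U(2))\to\pi_k(\SO(4))$ is a split injection for all $k\ge 2$. Second, by the Isotopy Extension Theorem the smooth orbit map is a fibration onto the connected component of $K$, with fibre the stabiliser $\mathrm{Stab}(K)\subseteq\Diff^+(\NS^3)$; by Hatcher's computations of diffeomorphism groups of Haken and Seifert–fibred manifolds together with geometrization, every component of $\mathrm{Stab}(K)$ is homotopy equivalent to a compact Lie group of dimension at most two (a subtorus, or a finite group for hyperbolic $K$), so $\pi_k(\mathrm{Stab}(K))=0$ for $k\ge 2$. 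The long exact sequence then forces the smooth orbit map to be injective on $\pi_k$, $k\ge 2$, and composing with $\iota_*$ gives injectivity of $\pi_k(j\circ\hat\Phi_L)$, hence of $\pi_k(\hat\Phi_L)$.

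For $k=1$, the generator $c(\theta)=\mathrm{diag}(e^{i\theta},1)$ of $\pi_1(\U(2))=\Z$ maps to the non-trivial class of $\pi_1(\SO(4))=\Z/2$, which cannot detect infinite order; so I would compute the formal invariants of $\hat\Phi_L\circ c$, conveniently via the central loop $\rho(\theta)=e^{i\theta}\Id$ (the Reeb/Hopf flow), which represents $2[c]$. A direct computation in the framing $\xi_\std(p)=\langle jp,kp\rangle$ gives $\Rot_{\pi_1}(\rho)=2$ and $\tb_{\pi_1}(\rho)=0$, whence by additivity the image of $c$ has $(\Rot_{\pi_1},\tb_{\pi_1})=(\pm 1,0)$. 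Now the ambiguity in lifting a loop from $\widehat{\Leg}$ to $\Leg$ is reparametrisation, under which Proposition \ref{prop:FormulaParametrization} shifts $(\Rot_{\pi_1},\tb_{\pi_1})$ by multiples of $(\Rot(L),\tb(L))$, so the pair descends to a homomorphism $\pi_1(\widehat{\Leg}(\NS^3,\xi_\std))\to\Z^2/\Z\cdot(\Rot(L),\tb(L))$. If $\Rot(L)=0$, the coordinate $\Rot_{\pi_1}$ is already well defined on the nose (no Seifert surface is required) and sends the $n$-th power of the generator to $\pm n$, which is injective. If $\tb(L)\neq 0$ and $L$ is an unknot, torus or hyperbolic knot, then $\tb_{\pi_1}$ is a genuine $\Z$-valued invariant (the Remark following Proposition \ref{prop:FormulaParametrization}, via Hatcher's knot spaces \cite{HatcherKnots}), and the class $(n,0)$ lies in $\Z\cdot(\Rot(L),\tb(L))$ only if $n\,\tb(L)=0$, forcing $n=0$; again injective. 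This reproduces exactly the two stated hypotheses.

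The hardest input for $k\ge 2$ is the vanishing $\pi_k(\mathrm{Stab}(K))=0$ for $k\ge 2$ and \emph{every} knot $K$: this is where the full strength of Hatcher's theory and geometrization enters, and it is what makes the $k\ge 2$ statement unconditional on the knot type. For $k=1$ the delicate points are the explicit framing computation yielding $\tb_{\pi_1}=0$ on the generator — so that the detecting conditions come out precisely as $\Rot(L)=0$ or $\tb(L)\neq 0$ — and the well-definedness of $\tb_{\pi_1}$, which is exactly why the second hypothesis must restrict to the unknot, torus and hyperbolic cases.
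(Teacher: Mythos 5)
Your $k=1$ argument is essentially the paper's own proof: the paper also reduces to the generator $A_\theta=\mathrm{diag}(1,e^{i\theta})$ of $\pi_1(\U(2))$, computes $\Rot_{\pi_1}(A_\theta^m\cdot\gamma)=m$, observes $\tb_{\pi_1}(A_\theta^m\cdot\gamma^0)=0$ because the Seifert surface is carried along by the ambient unitary loop, and then kills the reparametrisation ambiguity with Proposition \ref{prop:FormulaParametrization} exactly as you do (your quotient $\Z^2/\Z\cdot(\Rot(L),\tb(L))$ is just a structural repackaging of the paper's equations (\ref{eq:RotReparametrization}) and (\ref{eq:TbReparametrization})). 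Your detour through the central Hopf loop $\rho=e^{i\theta}\Id$ with $[\rho]=2[c]$ is a harmless variant.

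For $k\geq 2$, however, you take a genuinely different and much heavier route than the paper, and this is where the one soft spot lies. The paper's argument is completely elementary and uses no knot-space theory at all: since $\gamma(0)=(1,0)^t$ and $\SU(2)$ acts simply transitively on $\NS^3$ through the evaluation map, the composite $\SU(2)\to\Leg(\NS^3,\xi_\std)\to\NS^3$, $A\mapsto A\cdot\gamma\mapsto A(\gamma(0))$, is a diffeomorphism, so $\pi_k(\Phi_\gamma)$ is (split) injective for $k\geq 2$ because $\pi_k(\U(2))\cong\pi_k(\SU(2))$ in that range. Your route instead pushes into the smooth knot space and needs the vanishing $\pi_k(\mathrm{Stab}(K))=0$, $k\geq2$, for \emph{every} knot $K$. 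That claim is believable, but it is not contained in the references you invoke: Hatcher's paper \cite{HatcherKnots} establishes the homotopy type of knot-space components only for the unknot, torus knots and hyperbolic knots, and leaves satellite knots as a conjecture (later settled by Budney, whom you would have to cite); assembling asphericity of stabiliser components from Hatcher's Haken/Seifert-fibred $\Diff$ computations plus the tubular-neighbourhood fibration is a real argument you have not supplied. So as written, your unconditional $k\geq2$ statement has a gap precisely for satellite knots, which the paper's evaluation-map trick avoids entirely; if you close it with the appropriate citation your proof works, but at the cost of far stronger input than the statement requires. A final minor point: Hatcher's results are for parametrised knot spaces, while you phrase the orbit fibration over the unparametrised space $\widehat{\Emb}(\NS^1,\NS^3)$; this is easily repaired via the $\Diff^+(\NS^1)$-quotient fibration (the paper likewise passes from $\Phi_\gamma$ to $\hat\Phi_L$ using $\widehat{\Leg}=\Leg/\Diff^+(\NS^1)$), but it should be said.
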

\begin{remark}
	By Proposition \ref{prop:Bennequin} the second condition is always satisfied for any Legendrian unknot. Moreover, the classification of the Legendrian figure eight knots (see \cite{EtnyreHonda}) implies that the second condition is always satisfied.
\end{remark}

\begin{proof}
For $p\in\NS^3$, we have that $\xi_\std (p)=\langle jp, kp\rangle$. Let $\gamma\in\Leg(\NS^3,\xi_\std)$ be any parametrization of $L$. Since the unitary groups acts transitively on $\NS^3$ we may assume that $\gamma(0)=(1,0)^t\in\NS^3\subseteq\C^2$.

\begin{lemma}
	The maps \[ \pi_k (\Phi_\gamma):\pi_k (\U(2),\Id)\rightarrow\pi_k (\Leg(\NS^{3},\xi_\std),\gamma)\] are injective for all $k$.
\end{lemma}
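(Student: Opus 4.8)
The plan is to factor $\Phi_\gamma$ through an evaluation map at the basepoint $t=0$ and to recognize the resulting composite as the orbit map of a \emph{free and transitive} $\U(2)$--action on a circle bundle. Such an orbit map is a homeomorphism, hence an isomorphism on all homotopy groups, and this forces $\pi_k(\Phi_\gamma)$ to be injective for every $k$.

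First I would introduce the unit--circle bundle of the contact distribution,
\[ \F=\{(p,v)\ :\ p\in\NS^3,\ v\in\xi_\std(p),\ |v|=1\}, \]
which the chosen framing $\xi_\std(p)=\langle jp,kp\rangle$ trivializes as $\F\cong\NS^3\times\NS^1$. I then define the evaluation map
\[ \operatorname{ev}\colon \Leg(\NS^3,\xi_\std)\longrightarrow \F,\qquad \delta\longmapsto\Bigl(\delta(0),\,\tfrac{\delta'(0)}{|\delta'(0)|}\Bigr). \]
This is well defined and continuous: for a Legendrian embedding $\delta$ one has $\delta'(0)\neq 0$ and $\delta'(0)\in\xi_\std(\delta(0))$, so its normalization is a genuine point of the fiber, and evaluation of the $0$-- and $1$--jets at a fixed point is continuous in the $C^\infty$ topology.

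Next I would compute the composite $\operatorname{ev}\circ\Phi_\gamma$. Since each $A\in\U(2)$ acts $\C$--linearly, we have $(A\cdot\gamma)(0)=A\cdot\gamma(0)=A\cdot N$ and $(A\cdot\gamma)'(0)=A\cdot\gamma'(0)$, so that $\operatorname{ev}\circ\Phi_\gamma(A)=\bigl(A\cdot N,\ A\cdot\gamma'(0)/|A\cdot\gamma'(0)|\bigr)$. This is precisely the orbit map of the natural $\U(2)$--action on $\F$ through the point $(N,\gamma'(0)/|\gamma'(0)|)$. I would then check that this action is transitive with trivial stabilizer. Indeed, $\U(2)$ acts transitively on $\NS^3$, the stabilizer of $N=(1,0)^t$ is $\{\operatorname{diag}(1,e^{i\phi})\}\cong\U(1)$, and a direct quaternionic computation (the same one underlying the $1$--jet left inverse to $\U(2)\hookrightarrow\Cont(\NS^3,\xi_\std)$) shows that this isotropy acts on $\xi_\std(N)=\langle j,k\rangle$ by the honest rotation $j\mapsto\cos\phi\,j+\sin\phi\,k$, $k\mapsto-\sin\phi\,j+\cos\phi\,k$. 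Hence $\U(1)$ acts simply transitively on the directions of the fiber over $N$, the full stabilizer of $(N,\gamma'(0)/|\gamma'(0)|)$ is trivial, and the action is transitive on all of $\F$.

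Consequently $\operatorname{ev}\circ\Phi_\gamma$ is a continuous bijection from the compact group $\U(2)$ onto the Hausdorff space $\F$, hence a homeomorphism, and therefore induces isomorphisms on all homotopy groups. Since $\pi_k(\operatorname{ev})\circ\pi_k(\Phi_\gamma)$ is an isomorphism, $\pi_k(\Phi_\gamma)$ must be injective for every $k$, as claimed. The only delicate point is the stabilizer computation, namely verifying that the isotropy $\U(1)$ acts faithfully and transitively (by a degree--one rotation) on the contact plane at $N$; this is exactly the geometric fact that makes the $1$--jet evaluation a left inverse to $\U(2)\hookrightarrow\Cont(\NS^3,\xi_\std)$, and everything else in the argument is formal.
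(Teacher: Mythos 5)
Your proof is correct, but it takes a genuinely different route from the paper's. The paper argues in two separate cases: for $k\geq 2$ it composes $\Phi_\gamma$ with evaluation at the point $\gamma(0)$ only, notes that $\SU(2)\rightarrow\NS^3$, $A\mapsto A(\gamma(0))$, is a diffeomorphism, and uses $\pi_k(\U(2))\cong\pi_k(\SU(2))$ for $k\geq2$; for $k=1$ (where that identification fails) it instead computes the loop invariant $\Rot_{\pi_1}(A_\theta^m\cdot\gamma)=m$ on the generator $A_\theta=\operatorname{diag}(1,e^{i\theta})$ of $\pi_1(\U(2))$. You replace both steps by a single one: evaluating the full contact $1$--jet $\delta\mapsto(\delta(0),\delta'(0)/|\delta'(0)|)$ into the unit circle bundle $\F$ of $\xi_\std$, and recognizing $\operatorname{ev}\circ\Phi_\gamma$ as the orbit map of the $\U(2)$--action on $\F$, which your stabilizer computation (correct: $\operatorname{diag}(1,e^{i\phi})$ rotates $\xi_\std(N)=\langle j,k\rangle$ by angle $\phi$, so the isotropy $\U(1)$ of $N$ acts simply transitively on the fiber directions) shows to be free and transitive, hence a homeomorphism $\U(2)\cong\F$ by compactness. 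This buys uniformity in $k$ and a formally stronger conclusion — $\pi_k(\Phi_\gamma)$ is a split injection, with splitting induced by an actual map of spaces — whereas the paper's argument is shorter case by case and avoids the isotropy computation; note though that your fiber-direction analysis is secretly the same geometric fact the paper exploits at $k=1$, since under the trivialization $\F\cong\NS^3\times\NS^1$ the winding of the fiber coordinate of $\operatorname{ev}$ along a loop is exactly $\Rot_{\pi_1}$. Two cosmetic points: you implicitly assume $\gamma(0)=N$, which is the paper's standing normalization (harmless, and avoidable by basing the orbit at $(\gamma(0),\gamma'(0)/|\gamma'(0)|)$ instead), and the precise sign of the rotation angle depends on the identification $\C^2\cong\mathbb{H}$, but only faithfulness and transitivity of the isotropy action matter for your argument.
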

\begin{proof}
The composition 
	\begin{center}
		$\begin{array}{rcccl}
		\SU(2) &\longrightarrow &  \Leg(\NS^3,\xi_\std) & \longrightarrow & \NS^3 \\
		A & \longmapsto & A\cdot\gamma & \longmapsto & A(\gamma(0)),
		\end{array}$
	\end{center}
defines a diffeomorphism. Thus, since $\pi_k(\U(2))\cong\pi_k (\SU(2))$ for $k\geq2$, we conclude that $\pi_k (\Phi_\gamma)$ is injective for $k\geq 2$.

Observe that $\pi_1 (\U(2))=\{[A_{\theta}]^m=[A_{\theta}^{m}]:m\in\Z\}$ where
\[ A_\theta =\left( {\begin{array}{cc}
		1  & 0 \\
		0 & e^{ i \theta} \\
\end{array} } \right), \theta\in\NS^1. \]
Hence, \[\Rot_{\pi_1}(A_{\theta}^{m} \cdot \gamma)=m\] and $\pi_1 (\Phi_\gamma)$ is also injective. 
\end{proof}

Since $\widehat{\Leg}(\NS^3,\xi_\std)=\Leg(\NS^3,\xi_\std)/\Diff^+(\NS^1)$ the last lemma implies that $\pi_k (\hat{\Phi}_L)$ is injective for $k\geq 2$.

To conclude the injectivity of the map $\pi_1(\hat{\Phi}_L)$ we must check that the loops $A_{\theta}^{m}(L)$, $m\neq 0$, are non trivial for any choice of parametrization. The possible parametrizations of these loops are given by \[ \gamma^{\theta,k}_{m}(t)=A_{\theta}^{m}\gamma(t-k\theta), k\in\Z.\]

Assume that there exists $k\in\Z$ such that $\gamma^{\theta,k}_{m}$ is trivial. Thus,
\begin{equation}\label{eq:RotReparametrization}
\Rot_{\pi_1}(\gamma^{\theta,k}_{m})=\Rot_{\pi_1}(\gamma^{\theta,0}_{m})-k\Rot(L)=m-k\Rot(L)=0.
\end{equation} 
Hence, if $\Rot(L)=0$ the map $\pi_1 (\hat{\Phi}_L)$ is injective.

From now on assume that $L$ is an unknot or a torus knot or a hyperbolic knot. The $\tb_{\pi_1}$ invariant is always well defined  for the unknot and any torus knot. Moreover, for a hyperbolic knot $\tb_{\pi_1}$ is well defined in $\im(\hat{\Phi}_L)$. Assume that there exists $k\in\Z$ such that $\gamma^{\theta,k}_m$ is trivial for $m\neq0$. Thus, $k\neq0$ by (\ref{eq:RotReparametrization}). On the other hand
\begin{equation}\label{eq:TbReparametrization}
\tb_{\pi_1}(\gamma^{\theta,k}_{m})=\tb_{\pi_1}(\gamma^{\theta,0}_{m})-k\tb(L)=-k\tb(L)=0.
\end{equation} 
Realize that, since $\gamma^{\theta,0}_m= A_{\theta}^m \cdot \gamma^0$. Thus, after fixing a Seifert surface $S_0$ for $\gamma^0$, we get a family of Seifert surfaces $S_{\theta}$ and it is clear that $ \tb_{\pi_1}(\gamma^{\theta,0}_{m})=0$. Hence, if $\tb(L)\neq0$ the map $\pi_1 (\hat{\Phi}_L)$ is an injection.
\end{proof}

\subsection{Non injectivity of the map $\pi_1 (\widehat{\Leg}(\NS^3,\xi_\std),L)\rightarrow \pi_1(\Knots(\NS^3),L)$.}
Let $\Knots(\NS^3)$ be the space of embedded circles in $\NS^3$. The map (\ref{eq:OrbitLegendrian}) allows us to construct plenty of examples of loops which are homotopically trivial in the smooth category, i.e. inside $\Knots(\NS^3)$, but non--trivial in the Legendrian setting.

\begin{proposition}\label{cor:NonInjectiveGeneral}
Let $L\in\widehat{\Leg}(\NS^3,\xi_\std)$ be a Legendrian which satisfies one of the following conditions:
\begin{itemize}
	\item $|\Rot(L)|\neq1,2$ or
	\item $\tb(L)\neq 0$ and $L$ is an unknot or a torus knot or a hyperbolic knot.
\end{itemize}
Then, the homomorphism $\pi_1 (\widehat{\Leg}(\NS^3,\xi_\std), L)\rightarrow \pi_1(\Knots(\NS^3), L)$, induced by the inclusion, is non injective.
\end{proposition}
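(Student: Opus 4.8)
The plan is to produce an explicit nonzero class in $\pi_1(\widehat{\Leg}(\NS^3,\xi_\std),L)$ that lies in the kernel of the map induced by the inclusion $\widehat{\Leg}(\NS^3,\xi_\std)\hookrightarrow\Knots(\NS^3)$. The natural candidate is the orbit loop $A_\theta^2(L)$, where $A_\theta$ is the generator of $\pi_1(\U(2))$ used in the proof of Theorem \ref{thm:InjectionUnitaryGroup}, i.e. $A_\theta(z_1,z_2)=(z_1,e^{i\theta}z_2)$. The whole point of taking the \emph{square} $A_\theta^2$ rather than $A_\theta$ itself is that the square becomes smoothly trivial while remaining Legendrianly nontrivial, so the two halves of the argument are: triviality in $\Knots(\NS^3)$ and nontriviality in $\widehat{\Leg}(\NS^3,\xi_\std)$.

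First I would show $A_\theta^2(L)$ is null-homotopic in $\Knots(\NS^3)$. This loop factors through the orbit map of the action by orientation-preserving diffeomorphisms, $\U(2)\hookrightarrow\Diff^+(\NS^3)\to\Knots(\NS^3)$, and this factorization is compatible with the one into $\widehat{\Leg}$ because $\widehat{\Leg}(\NS^3,\xi_\std)\hookrightarrow\Knots(\NS^3)$ is $\U(2)$-equivariant (the contactomorphisms in $\U(2)$ act as ordinary diffeomorphisms on smooth knots). Now $A_\theta$ fixes one complex line and rotates the orthogonal one by a full turn, so $[A_\theta]$ is the generator of $\pi_1(\SO(4))\cong\Z/2$; via the homotopy equivalence $\SO(4)\simeq\Diff^+(\NS^3)$ induced by the inclusion, this gives $[A_\theta^2]=0$ in $\pi_1(\Diff^+(\NS^3))$. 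Pushing any null-homotopy of $A_\theta^2$ in $\Diff^+(\NS^3)$ through the continuous orbit map $g\mapsto g(L)$ yields a null-homotopy of $A_\theta^2(L)$ in $\Knots(\NS^3)$.

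Next I would show $A_\theta^2(L)$ is nontrivial in $\pi_1(\widehat{\Leg}(\NS^3,\xi_\std),L)$. As in the proof of Theorem \ref{thm:InjectionUnitaryGroup}, such a loop of unparametrized Legendrians is trivial in $\widehat{\Leg}$ precisely when one of its lifts $\gamma^{\theta,k}_2(t)=A_\theta^2\gamma(t-k\theta)$ is null-homotopic in $\Leg$, so it suffices to rule out every $k\in\Z$. Using Proposition \ref{prop:FormulaParametrization}(i) together with $\Rot_{\pi_1}(A_\theta^2\cdot\gamma)=2$, one gets
\[\Rot_{\pi_1}(\gamma^{\theta,k}_2)=2-k\,\Rot(L),\]
a degree, hence a homotopy invariant that must vanish on any null-homotopic loop. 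Under the first hypothesis $|\Rot(L)|\neq 1,2$ (which includes $\Rot(L)=0$), the equation $2-k\,\Rot(L)=0$ has no integer solution, so no lift is trivial. Under the second hypothesis, $\tb_{\pi_1}$ is well defined on this orbit loop (always for the unknot and torus knots, and on $\im(\hat{\Phi}_L)$ for hyperbolic knots); since $\tb_{\pi_1}(\gamma^{\theta,0}_2)=0$, Proposition \ref{prop:FormulaParametrization}(ii) gives $\tb_{\pi_1}(\gamma^{\theta,k}_2)=-k\,\tb(L)$, so a trivial lift would force $k=0$ as $\tb(L)\neq 0$, and then $\Rot_{\pi_1}(\gamma^{\theta,0}_2)=2\neq 0$, a contradiction. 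In either case $[A_\theta^2(L)]\neq 0$ in $\widehat{\Leg}$ while it dies in $\Knots(\NS^3)$, which proves the non-injectivity.

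The main obstacle to keep in mind is that triviality in $\widehat{\Leg}$ means triviality up to \emph{every} reparametrization, so one cannot reason with a single parametrization; the reparametrization formulas of Proposition \ref{prop:FormulaParametrization}, combined with the homotopy invariance of $\Rot_{\pi_1}$ (and of $\tb_{\pi_1}$ where defined), are exactly what close this gap. Correspondingly, the arithmetic hypothesis $|\Rot(L)|\neq 1,2$ is precisely the condition under which the rotation-number obstruction survives for the smoothly trivial square loop $A_\theta^2(L)$.
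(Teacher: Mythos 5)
Your proposal is correct and follows essentially the same route as the paper's proof: an even power $A_\theta^m(L)$ of the orbit loop is smoothly trivial because $[A_\theta]$ generates $\pi_1(\SO(4))\cong\Z/2$ and $\Diff^+(\NS^3)\simeq\SO(4)$, while Legendrian nontriviality is ruled out for \emph{every} reparametrization $\gamma^{\theta,k}_m$ via the $\Rot_{\pi_1}$ obstruction (and, under the second hypothesis, $\tb_{\pi_1}$ with $k\neq 0$ forced out by $\tb(L)\neq 0$). The only cosmetic difference is that you fix $m=2$ throughout, whereas the paper takes $m=\Rot(L)+2$ or $\Rot(L)+1$ according to parity in the first case; both choices make the equation $m-k\Rot(L)=0$ unsolvable exactly when $|\Rot(L)|\neq 1,2$.
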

\begin{figure}[h]
	\centering
	\includegraphics[width=0.8\textwidth]{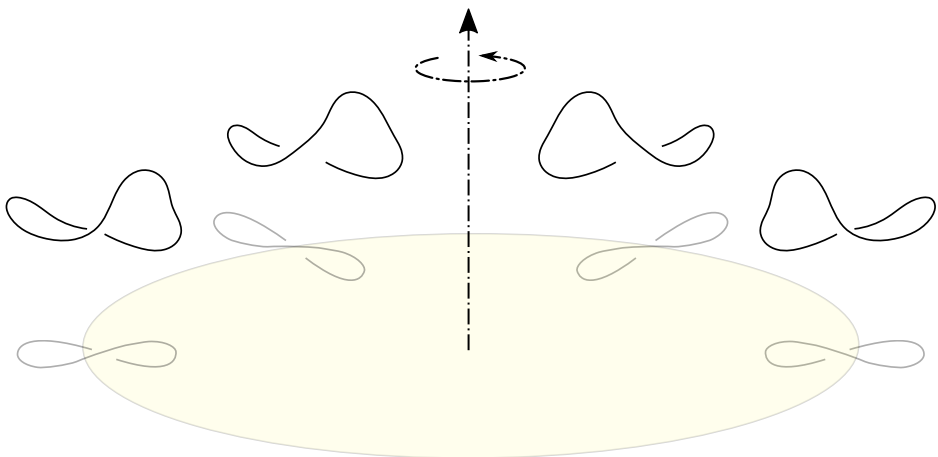}
	\caption{Schematic picture of the loop $A^m_\theta(L)$ for the standard Legendrian unknot $L$, $\theta\in[0,\pi/m]$. \label{fig:LoopSmooth}}
\end{figure}
\begin{proof}
Assume that the second condition holds. Let $m\neq 0$ be any even integer, by Theorem \ref{thm:InjectionUnitaryGroup} the loop $A_{\theta}^m(L)$ is non--trivial. Finally, observe that since $m$ is even $A_{\theta}^m$ is trivial as a loop in $\SO(4)$. Thus, $A_{\theta}^m(L)$ is homotopically trivial inside $\Knots(\NS^3)$.

On the other hand, assume that the first condition holds. Let $\gamma\in\Leg(\NS^3,\xi_\std)$ be any parametrization of $L$. Take $m=\Rot(L)+2$ if $\Rot(L)$ is even and $m=\Rot(L)+1$ if it is odd. Since $m$ is even $A_{\theta}^{m} (L)$ is trivial in $\Knots(\NS^3)$. However, the loop $A_{\theta}^{m} (L)$ is non-trivial inside $\widehat{\Leg}(\NS^3,\xi_\std)$. Indeed, all the parametrizations of the loop are given by $\gamma^{\theta,k}_{m}(t)=A_{\theta}^{m}\gamma(t-k\theta)$. The equality $\Rot_{\pi_1}(\gamma^{\theta,k}_{m})=m-k\Rot(L)=0$ cannot hold for any $k\in\Z$ since it implies that $\Rot(L)$ divides $1$ or $2$ and this is not true.
\end{proof}


\begin{thebibliography}{9999}
	
\bibitem[Ben]{Bennequin} D. Bennequin. \emph{Entrelacements et \'{e}quations de Pfaff.} Third Schnepfenried geometry conference, Vol. 1 (Schnepfenried, 1982), 87--161, 
Ast\'{e}risque, 107-108, Soc. Math. France, Paris, 1983. 

\bibitem[CS]{CS} R. Casals, O. Sp\`{a}\v cil. \emph{Chern-Weil theory and the group of strict contactomorphisms.} J. Topol. Anal. 8 (2016), no. 1, 59--€"87.	
\bibitem[Ch]{Chekanov} Y. Chekanov. \emph{Differential algebra of Legendrian links.} Invent. Math. 150 (2002), no. 3, 441--483.

\bibitem[DG]{DingGeiges} F. Ding, H. Geiges. \emph{The diffeotopy group of $\NS^1\times\NS^2$ via contact topology.} Compos. Math. 146 (2010), no. 4, 1096--1112.

\bibitem[E]{Eliashberg} Y. Eliashberg. \emph{Contact 3-manifolds twenty years since J. Martinet's
work}. Ann. Inst. Fourier 46 (1992), 165--192.

\bibitem[EF]{EliashbergFraser} Y. Eliashberg, M. Fraser. \emph{Topologically trivial Legendrian knots.} J. Symplectic Geom. 7 (2009), no. 2, 77--127.

\bibitem[Et]{Etnyre} J. Etnyre. \emph{Legendrian and transversal knots.} Handbook of knot theory, 105--185, Elsevier B. V., Amsterdam, 2005.

\bibitem[EtH]{EtnyreHonda} J. Etnyre, K. Honda. \emph{Knots and contact geometry. I. Torus knots and the figure eight knot.} J. Symplectic Geom. 1 (2001), no. 1, 63--120.

\bibitem[FMP]{FMP} E. Fern\'{a}ndez, J. Mart\'{i}nez-Aguinaga, F. Presas. \emph{Fundamental groups of formal Legendrian and horizontal embedding spaces.} arXiv:1711.04320

\bibitem[G]{GeigesBook} H. Geiges. An Introduction to Contact Topology. Cambr. Studies in Adv. Math. 109. Cambr. Univ. Press 2008.

\bibitem[Ha83]{HatcherSmale} A. Hatcher. \emph{A proof of the Smale conjecture, ${\rm Diff(\NS^3)}\simeq O(4)$.} 
Ann. of Math. (2) 117 (1983), no. 3, 553--607.  

\bibitem[Ha99]{HatcherKnots} A. Hatcher. \emph{Spaces of knots.} arXiv:math/9909095.

\bibitem[Hi]{Hirsch} M.W. Hirsch. \emph{Immersions of manifolds.} Trans. Amer. Math. Soc. 93 1959 242--276.


\bibitem[Ng]{Lenhard} L. Ng. \emph{Computable Legendrian invariants.} Topology 42 (2003), no. 1, 55--82.  
	

\bibitem[Mur]{Murphy} E. Murphy. \emph{Loose Legendrian Embeddings in High Dimensional Contact Manifolds.} arXiv:1201.2245v4 

\bibitem[Sza]{Szabo} P. Ozsv\'{a}th, Z. Szab\'{o}, D. Thurston. \emph{Legendrian knots, transverse knots and combinatorial Floer homology}. Geometry \& Topology 12 (2008), no. 2, 941--980.

\end{thebibliography}
\end{document}